%
\input ./style/arxiv-vmsta.cfg
\documentclass[numbers,compress,v1.0.1]{vmsta}

\usepackage{lscape}

\volume{2}
\issue{4}
\pubyear{2015}
\firstpage{421}
\lastpage{441}
\doi{10.15559/15-VMSTA45}


\startlocaldefs

\urlstyle{rm}
\allowdisplaybreaks
\endlocaldefs

\newtheorem{thm}{Theorem}

\theoremstyle{definition}
\newtheorem{defin}{Definition}

\begin{document}
\begin{frontmatter}

\title{Ruin probability in the three-seasonal discrete-time risk model}

\author{\inits{A.}\fnm{Andrius}\snm{Grigutis}\fnref{f1}}\email{andrius.grigutis@mif.vu.lt}
\fntext[f1]{The first author was supported by grant No. MIP-049/2014
from the Research Council of Lithuania.}
\author{\inits{A.}\fnm{Agne\v{s}ka}\snm{Korvel}}\email{agneska.korvel@mif.vu.stud.lt}
\author{\inits{J.}\fnm{Jonas}\snm{\v Siaulys}\corref{cor1}}\email{jonas.siaulys@mif.vu.lt}
\cortext[cor1]{Corresponding author.}
\address{Faculty of Mathematics and Informatics, Vilnius University,
Naugarduko 24, Vilnius~LT-03225, Lithuania}

\markboth{A. Grigutis et al.}{Ruin probability in the three-seasonal
discrete-time risk model}

\begin{abstract}
This paper deals with the discrete-time risk model with nonidentically
distributed claims. We suppose that the claims repeat with time periods
of three units, that is, claim distributions coincide at times
$\{1,4,7,\ldots\}$, at times $\{2,5,8,\ldots\}$, and at times
$\{3,6,9,\ldots\}$. We present the recursive formulas to calculate the
finite-time and ultimate ruin probabilities. We illustrate the
theoretical results by several numerical examples.
\end{abstract}

\begin{keyword}
Inhomogeneous model\sep
three-seasonal model\sep
finite-time ruin probability\sep
ultimate ruin probability
\MSC[2010] 91B30\sep60G50
\end{keyword}

\received{26 November 2015}
\revised{14 December 2015}
\accepted{14 December 2015}
\publishedonline{30 December 2015}
\end{frontmatter}

\section{Introduction}\label{i}

The discrete-time risk model is a classical collective risk model for
insurance. In the homogeneous version of this model, the
insurer's surplus at each time $n\in\mathbb{N}_0=\{0,1,2,\ldots\}$ is
defined by the following equality:
\begin{equation}
\label{a1} W_u(n)=u+n-\sum\limits
_{i=1}^{n}Z_i,
\end{equation}
where $u\in\mathbb{N}_0$ is the initial insurer's surplus, and the
claim amounts $Z_1, Z_2, \ldots$ are assumed to be independent copies
of a nonnegative
integer-valued random variable~$Z$. This random variable and the
initial surplus $u$ generate the homogeneous discrete-time
risk model. A typical path of the surplus process $W_u(n)$ is shown in Fig.~\ref{fig1}.

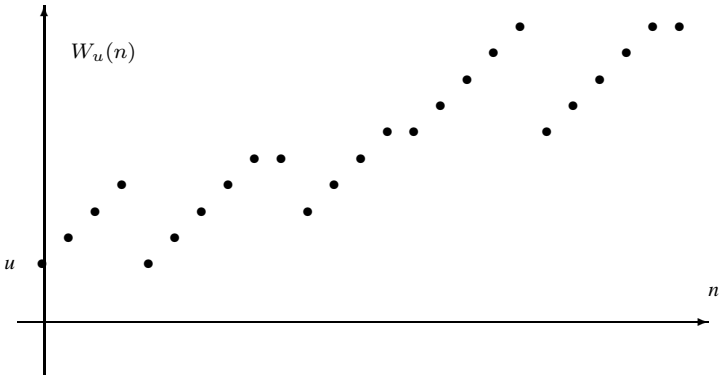
\begin{figure}[h]
\begin{center}
\begin{picture}(250,130) \put(250,10){\textit{n}}
\put(-15,20){\textit{u}} \put(10,100){${W_u}(n)$}
\put(-10,0){\vector(1,0){260}}
\put(0,-20){\vector(0,1){140}} \put(-3,20){$\bullet$}
\put(7,30){$\bullet$} \put(17,40){$\bullet$}
\put(27,50){$\bullet$}
\put(37,20){$\bullet$} \put(47,30){$\bullet$}
\put(57,40){$\bullet$} \put(67,50){$\bullet$}
\put(77,60){$\bullet$} \put(87,60){$\bullet$}
\put(97,40){$\bullet$}
\put(107,50){$\bullet$} \put(117,60){$\bullet$}
\put(127,70){$\bullet$}
\put(137,70){$\bullet$}\put(147,80){$\bullet$}\put(157,90){$\bullet
$}\put(167,100){$\bullet$}\put(177,110){$\bullet$}
\put(187,70){$\bullet$}
\put(197,80){$\bullet$}\put(207,90){$\bullet$}\put(217,100){$\bullet
$}\put(227,110){$\bullet$}\put(237,110){$\bullet$}

\end{picture}
\vspace*{12pt}
\end{center}
\caption{Behavior of the surplus sequence $W_u(n)$}
\label{fig1}
\end{figure}

The claim amount generator $ Z$ can be characterized by the
probability mass function (p.m.f.)
\[
z_k=\mathbb{P}(Z=k), \quad k\in\mathbb{N}_0,
\]
or by the cumulative distribution function (c.d.f.)
\[
F_Z(x)=\sum\limits_{k=0}^{\lfloor x\rfloor}z_k, \quad x\in\mathbb{R},
\]
where $\lfloor x\rfloor$ denotes the integer part of $x$.

The homogeneous discrete-time risk model has been extensively
investigated by De Vylder and Goovaerts \cite{dv+goo-1988,dv+goo-1999}, Dickson \cite{di-1999,di-2005}, Gerber \cite
{ger}, Seal \cite{seal},
Shiu \cite{shiu-a,shiu-i}, Picard and Lef\`{e}vre \cite
{pl-1997,pl-2003}, Lef\`{e}vre and Loisel \cite{ll}, Leipus and
\v{S}iaulys \cite{ls}, Tang \cite{tang}, and other authors.
The ruin time, the ultimate ruin probability, and the finite-time ruin
probability are the main extremal characteristics of any risk model.
The first time $T_u$ when the surplus $W_u(n)$ becomes negative or null
is called the ruin time, that is,
%
\begin{equation*}
T_u= %
\begin{cases}
\inf\{n\in\mathbb{N}:W_u(n)\leqslant0\},  \\
\infty  \ \mbox{ if}\  W_u(n)>0 \ \mbox{for all } \ n\in\mathbb{N}.
\end{cases} %
\end{equation*}
The ruin probability until time $T\in\mathbb{N}$ is called the \textit
{finite-time ruin probability} and is defined by
\[
\psi(u,T)=\mathbb{P}(T_u\leqslant T).
\]
The infinite-time or \textit{ultimate ruin probability} is defined by
\[
\psi(u)=\mathbb{P}(T_u<\infty).
\]
So, for the \textit{ultimate survival probability}, we have
\[
\varphi(u)=1-\psi(u)=\mathbb{P}(T_u=\infty).
\]
The presented definitions imply that
\begin{align*}
&\psi(u,T)=\mathbb{P} \Biggl(\bigcup\limits_{n=1}^{T} \Biggl\{u+n-\sum\limits_{i=1}^{n}Z_i\leqslant0 \Biggr\} \Biggr)=\mathbb{P} \Biggl(\max\limits_{1\leqslant n\leqslant T}\sum\limits_{i=1}^{n}(Z_i-1)\geqslant u \Biggr),\\
&\psi(u)=\mathbb{P} \Biggl(\bigcup\limits_{n=1}^{\infty} \Biggl\{u+n-\sum\limits_{i=1}^{n}Z_i\leqslant0 \Biggr\}\Biggr)=\mathbb{P} \Biggl(\sup\limits_{n\geqslant1}\sum\limits_{i=1}^{n}(Z_i-1)\geqslant u \Biggr),\\
&\varphi(u)=\mathbb{P} \Biggl(\bigcap\limits_{n=1}^{\infty} \Biggl\{u+n-\sum\limits_{i=1}^{n}Z_i> 0 \Biggr\}\Biggr),\\
&\lim\limits_{T\nearrow\infty}\psi(u,T)=\psi(u).
\end{align*}

Several formulas and procedures for computing finite-time ruin
probability and ultimate ruin probability have been proposed in the
literature. Here we
present some of them having the recursive form.

\begin{itemize}
\item \textit{For the homogeneous discrete-time risk model, we have
\textup{(}see, for instance, \textup{\cite{dv+goo-1988,di-2005,dw-1991}):}
\begin{align*}
\psi(u,1)&=1-F_Z(u), \quad u\in\mathbb{N}_0,\\
\psi(u,T)&=\psi(u,1)+\sum\limits_{k=0}^{u}\psi(u+1-k,T-1)z_k, \quad\! u\in \mathbb{N}_0, \quad\! T\in\{2,3,\ldots\}.
\end{align*}}
\item \textit{If model \eqref{a1} is generated by the claim
generator $Z$ such that $\mathbb{E}Z<1$, then the ultimate ruin
probability can be calculated by the formulas \textup{(}see, for instance,
\textup{\cite{di-2005,dw-1991, shiu-i}):}
\begin{align}
\psi(0)&=\mathbb{E}Z,\label{a2}\\
\psi(u)&=\sum\limits_{j=1}^{u-1}\bigl(1-F_Z(j)\bigr)\psi(u-j) +\sum\limits_{j=u}^{\infty}\bigl(1-F_Z(j)\bigr), \quad u\in\mathbb{N}.\label{a3}
\end{align}}
\end{itemize}

If the homogeneous discrete-time risk model is generated by $Z$
satisfying condition $\mathbb{E}Z\geqslant1$, then we say that  the net profit
condition does not hold, and, in such a case, we
have that $\psi(u)=1$ for all $u\in\mathbb{N}_0$ according to the
general renewal theory (see, e.g., \cite{mi-2009} and
\xch{the references}{references} therein).

The formulas presented enable us to calculate $\psi(u)$ and $\psi(u,T)$
for $u\in\mathbb{N}_0$ and $T\in\mathbb{N}$. Nevertheless, there exist
many other methods that allow us to calculate or estimate the
finite-time and the ultimate ruin probabilities. Some of them can be
found in \cite{aa-2010, llg-2009, pl-2003}.

The assumption for claim amounts $\{Z_1,Z_2,\ldots\}$ to be
nonidentically distributed random variables is a natural generalization
of the homogeneous model. If r.v.s\break $\{Z_1,Z_2,\ldots\}$ are
independent but not necessarily identically distributed, then the\break
model defined by Eq.~\eqref{a1} is called the \textit{inhomogeneous
discrete-time risk model}. For such a model, a recursive procedure
for calculation of finite-time ruin probabilities can be found in \cite{bs,bbs}.
For the finite-time ruin probabilities
\[
\psi^{(j)}(u,T)=\mathbb{P} \Biggl(\bigcup\limits
_{n=1}^{T}
\Biggl\{ u+n-\sum\limits
_{i=1}^{n}Z_{i+j}\leqslant0
\Biggr\} \Biggr), \quad j\in\mathbb{N}_0,
\]
we have the following theorem.
\begin{thm}\label{t1}
Let us consider the inhomogeneous discrete-time risk model
defined by Eq.~\eqref{a1} in which $u\in\mathbb{N}_0$,
$z_k^{(j)}=\mathbb{P}(Z_{1+j}=k)$, $k,j\in\mathbb{N}_0$, and $
F_Z^{(j)}(x)=\mathbb{P}(Z_{1+j}\leqslant x)$, $x\in\mathbb{R}$. Then
\begin{align*}
\psi^{(j)}(u,1)&=1-F_Z^{(j)}(u),\\
\psi^{(j)}(u,T)&=\psi^{(j)}(u,1)+\sum\limits_{k=0}^{u}\psi ^{(j+1)}(u+1-k,T-1)z_k^{(j)}
\end{align*}
for all $u\in\mathbb{N}_0$ and $ T\in\{2,3,\ldots\}$.
\end{thm}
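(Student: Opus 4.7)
The plan is to verify the two formulas by a one-step conditioning on the first claim $Z_{1+j}$, following the same recursive idea that is available for the homogeneous model. The key structural fact I will exploit is that the recursion concerns not the claim index but the starting index $j$: after the first claim is consumed, the residual process is a fresh ruin problem whose claim generators are shifted by one, which is exactly $\psi^{(j+1)}$.

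First I would dispose of the base case $T=1$. For $T=1$ the union defining $\psi^{(j)}(u,1)$ collapses to the single event $\{u+1-Z_{1+j}\leqslant 0\}=\{Z_{1+j}\geqslant u+1\}$, and since $u\in\mathbb{N}_0$ and $Z_{1+j}$ is integer-valued, this has probability $1-\mathbb{P}(Z_{1+j}\leqslant u)=1-F_Z^{(j)}(u)$.

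Next, for $T\geqslant 2$, I would write $A_n=\{u+n-\sum_{i=1}^{n}Z_{i+j}\leqslant 0\}$ and split
\[
\psi^{(j)}(u,T)=\mathbb{P}(A_1)+\mathbb{P}\bigl(A_1^{c}\cap\bigcup\nolimits_{n=2}^{T}A_n\bigr).
\]
On $A_1^{c}$ we have $Z_{1+j}\leqslant u$, so by total probability, conditioning on $Z_{1+j}=k$ for $k\in\{0,1,\dots,u\}$,
\[
\mathbb{P}\bigl(A_1^{c}\cap\bigcup\nolimits_{n=2}^{T}A_n\bigr)=\sum_{k=0}^{u}z_k^{(j)}\,\mathbb{P}\bigl(\bigcup\nolimits_{n=2}^{T}A_n\bigm| Z_{1+j}=k\bigr),
\]
where the independence of $Z_{1+j}$ from $(Z_{2+j},Z_{3+j},\dots)$ will be used to drop the conditioning from those later claims. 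The crux is then the reindexing: for $n\geqslant 2$, substituting $Z_{1+j}=k$ and setting $m=n-1$, the inequality defining $A_n$ becomes
\[
(u+1-k)+m-\sum_{i=1}^{m}Z_{i+(j+1)}\leqslant 0,
\]
so $\bigcup_{n=2}^{T}A_n$ translates exactly into the ruin event, over horizon $T-1$, for the shifted model starting from initial surplus $u+1-k\in\{1,\dots,u+1\}\subset\mathbb{N}_0$ and first claim generator $Z_{2+j}$. Hence the conditional probability equals $\psi^{(j+1)}(u+1-k,T-1)$, and combining everything yields the stated recursion.

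I do not expect a genuine obstacle here; the argument is a bookkeeping exercise. The only place requiring attention is the index shift in the inner sum and making sure that the range $k\in\{0,\dots,u\}$ produces an admissible (nonnegative integer) surplus $u+1-k$ for the recursive call, which it does. The independence of $Z_{1+j}$ from the remaining claims, together with the fact that the shifted sequence $Z_{2+j},Z_{3+j},\ldots$ plays the role of the fresh claim stream $Z_{1+(j+1)},Z_{2+(j+1)},\ldots$ in the definition of $\psi^{(j+1)}$, is what justifies identifying the conditional tail probability with $\psi^{(j+1)}(u+1-k,T-1)$.
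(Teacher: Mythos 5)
Your argument is correct: the base case is immediate, and the one-step conditioning on $Z_{1+j}=k$ for $k\in\{0,\dots,u\}$ together with the reindexing $m=n-1$ correctly identifies the residual event with the ruin event of the shifted model, so that independence gives the factor $\psi^{(j+1)}(u+1-k,T-1)$. Note, however, that the paper itself does not prove Theorem~1 at all --- it quotes the recursion from the references on the inhomogeneous discrete-time model (Bla\v{z}evi\v{c}ius, Bieliauskien\.{e} and \v{S}iaulys; Bieliauskien\.{e} and \v{S}iaulys), so there is no in-paper proof to compare against; your conditioning argument is the standard derivation used in those sources and fills the gap cleanly.
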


According to this theorem, we can calculate the finite-time ruin
probability\break $ \psi^{(0)}(u,T)$ of the initial model for all
$u\in\mathbb{N}_0$ and $T\in\mathbb{N}$. Unfortunately, it is
impossible to get formulas for $\psi(u)$ similar to formulas \eqref{a2} and \eqref
{a3} in the general case because in the case of nonidentically
distributed claims, the future of model behavior at each time can be
completely new. In paper \cite{jj-2015}, the general discrete-time risk
model was restricted to the model with two kinds of claims. In this model,
there are two differently distributed claim amounts that are changing
periodically. We call such a model the bi-seasonal discrete-time risk
model. In \cite{jj-2015} (see Theorem 2.3), the following statement is
proved for the calculation of the ultimate ruin probability.

\begin{thm}\label{t2}
Let us consider a bi-seasonal discrete-time risk model
generated by independent random claim amounts $X$ and $Y$, that is,
$Z_{2k+1}\mathop{=}\limits^{d}X$ for $k\in\{0,1, \ldots\}$ and
$Z_{2k}\mathop{=}\limits^{d}Y$ for $k\in\{1,2,\ldots\}$. Denote
$S=X+Y$ and $x_n=\mathbb{P}(X=n)$,
$y_n=\mathbb{P}(Y=n)$, $s_n=\mathbb{P}(S=n)$ for $n\in\mathbb{N}_0=\{
0,1,\ldots\}$.
\begin{itemize}
\item If $\mathbb{E}X+\mathbb{E}Y<2$, then
\[\lim_{u\rightarrow\infty}\psi(u)=0.\]
\item If $s_0=x_0y_0\neq0$, then:
\begin{align*}
\psi(0)&=1-(2-\mathbb{E}S)\lim\limits
_{n\rightarrow\infty}\frac
{b_{n+1}-b_n}{a_n-a_{n+1}},
\\
1-\psi(u)&= \alpha_u\bigl(1-\psi(0)\bigr)+\beta_u(2-
\mathbb{E}S), \quad u\in\mathbb{N},
\end{align*}
where $\{\alpha_n\}$, $\{\beta_n\}$, $n\in\mathbb{N}_0$, are two
sequences of real numbers defined recursively by formulas:\vspace{-3pt}
\begin{align*}
& \alpha_0=1, \qquad \alpha_1=-\frac{1}{y_0}, \\
&\alpha_n=\frac{1}{s_0} \Biggl(\alpha_{n-2}-\sum\limits_{i=1}^{n-1}s_i\alpha_{n-i}-x_{n-1}\Biggr), \quad n\geqslant2,\\
&\beta_0=0, \qquad \beta_1=\frac{1}{y_0}, \\
&\beta_n=\frac{1}{s_0} \Biggl(\beta_{n-2}-\sum\limits_{i=1}^{n-1}s_i\beta_{n-i}+x_{n-1} \Biggr),\quad n\geqslant2.
\end{align*}
\item If $s_0\neq0$, then
\begin{align*}
\psi(1)&= 1-\bigl(1+\psi(0)-\mathbb{E}S\bigr)/y_0,\\
\psi(u)&= 1+\frac{1}{s_0} \Biggl(\psi(u-2)-1+\sum\limits_{k=1}^{u-1}s_k\bigl(1-\psi(u-k)\bigr) \Biggr)\\
&\quad -\frac{x_{u-1}(1-\psi(1))}{x_0},\quad u\in\{2,3,\ldots\}.
\end{align*}

\item If $x_0=0$, $y_0\neq0$, then $s_1\neq0$ and $\psi (0)=1$.
\item If $x_0\neq0$, $y_0=0$, then $s_1\neq0$ and $\psi (0)=\mathbb{E}S-1$.
\item If $s_0=0$, then, for $u\in\mathbb{N}$,
\[
\psi(u)=1-\frac{1}{s_1} \Biggl(1-\psi(u-1)-\sum\limits
_{k=2}^{u}s_k
\bigl(1-\psi (u-k+1)\bigr) \Biggr).
\]
\end{itemize}
\end{thm}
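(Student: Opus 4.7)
The plan is to start from the usual one-step recursion obtained by conditioning on the first claim. Writing $\widetilde\psi(u)$ for the ruin probability of the process that starts with a claim having the distribution of $Y$ (so that $\psi(u)$ is the probability for the process starting with $X$), conditioning on $Z_1$ yields the coupled system
\[
\psi(u)=\mathbb{P}(X>u)+\sum_{k=0}^{u}x_k\widetilde\psi(u+1-k),\qquad
\widetilde\psi(u)=\mathbb{P}(Y>u)+\sum_{k=0}^{u}y_k\psi(u+1-k).
\]
Substituting the second relation into the first and reindexing the double convolution via $m=k+j$ collapses the kernel to $s_m=(x\ast y)_m$; the only ``boundary defect'' is that at $m=u+1$ one picks up $s_{u+1}-x_{u+1}y_0$ instead of $s_{u+1}$. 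This produces a single two-step linear recursion whose leading coefficient is $s_0=x_0y_0$.

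For the case $s_0\neq0$ in part three, I would solve this recursion for $\psi(u+2)$ and shift the index; the displayed formula then reads off directly, with the extra term $x_{u-1}(1-\psi(1))/x_0$ arising from the boundary defect noted above. The auxiliary identity $\psi(1)=1-(1+\psi(0)-\mathbb{E}S)/y_0$ comes from applying the same coupled system at $u=0$ and using $\widetilde\psi(0)=1-y_0(1-\psi(1))$. Part four (case $s_0=0$) is immediate from the same recursion: with the $s_0\psi(u+2)$ term absent, the equation becomes first-order in $\psi$ and produces the last displayed formula. The two cases $x_0=0$ and $y_0=0$ are handled separately: if $x_0=0$ then $X\ge 1$ a.s., so $W_0(1)\le 0$ and $\psi(0)=1$; if $y_0=0$ the process is forced to survive the first two steps by $X=0$, $Y=1$ (respectively $X\le 1$ when $u=1$), and a short argument embedding a classical homogeneous model at even times with effective premium $2$ and claim $S$ yields $\psi(0)=\mathbb{E}S-1$ in parallel with the classical identity $\psi(0)=\mathbb{E}Z$.

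For the $(\alpha_u,\beta_u)$ representation, the key observation is that the recursion in part three is a second-order inhomogeneous linear difference equation for $1-\psi(u)$. The sequences $\alpha_u$, $\beta_u$ are chosen so that $\alpha$ solves the homogeneous version with initial data $(1,-1/y_0)$ while $\beta$ is a particular solution with initial data $(0,1/y_0)$; linearity then forces $1-\psi(u)=c_1\alpha_u+c_2\beta_u$ for some constants. Matching at $u=0,1$ with the known expression for $\psi(1)$ gives $c_1=1-\psi(0)$ and $c_2=2-\mathbb{E}S$. To determine $\psi(0)$ itself I would invoke the net-profit statement (part one), which is proved by applying the strong law of large numbers to the i.i.d.\ two-step increments $2-(Z_{2k-1}+Z_{2k})$ of mean $2-\mathbb{E}S>0$: the random walk drifts to $+\infty$, and a standard Lundberg-type tightness bound gives $\psi(u)\to 0$ as $u\to\infty$. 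Writing $1=\lim_u(\alpha_u(1-\psi(0))+\beta_u(2-\mathbb{E}S))$ and taking first differences eliminates the constant and yields
\[
(1-\psi(0))(\alpha_{u+1}-\alpha_u)+(2-\mathbb{E}S)(\beta_{u+1}-\beta_u)\longrightarrow 0,
\]
which rearranges to the stated ratio formula for $\psi(0)$.

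The main obstacle is the last step, namely justifying that the limit $(\beta_{u+1}-\beta_u)/(\alpha_u-\alpha_{u+1})$ actually exists and that the denominator is eventually nonzero. This is essentially a stability analysis of the second-order recursion: one must understand the asymptotic growth of the two independent homogeneous solutions (governed by the roots of the characteristic equation attached to the generating function of $S$ near $1$) and show that one of them is selected by the summability that $\psi(u)\to 0$ imposes. The algebraic verification that $\alpha_u$ and $\beta_u$ satisfy the claimed recursions is routine, but the spectral/asymptotic argument identifying the correct linear combination of basis solutions is the analytic heart of the proof.
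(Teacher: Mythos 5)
First, a point of reference: the paper does not prove Theorem~\ref{t2} at all — it is quoted from the bi-seasonal paper \cite{jj-2015} — so the honest comparison is with the proofs of Theorems~\ref{t4} and~\ref{t5}, which carry out the same programme one season up. Your overall architecture matches that template: a convolution recursion with kernel $s_m$, the strong law of large numbers for the net-profit part, an $(\alpha_u,\beta_u)$ representation proved by induction, and a first-difference trick to extract $\psi(0)$.

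There is, however, a genuine gap, and it sits at the load-bearing point. You claim that the identity $\psi(1)=1-(1+\psi(0)-\mathbb{E}S)/y_0$, equivalently $(1-\psi(0))+y_0(1-\psi(1))=2-\mathbb{E}S$, ``comes from applying the same coupled system at $u=0$.'' It does not. At $u=0$ the coupled system gives $1-\psi(0)=x_0(1-\widetilde\psi(1))$ and $1-\widetilde\psi(1)=y_0(1-\psi(2))+y_1(1-\psi(1))$, a purely local three-term relation among $\psi(0),\psi(1),\psi(2)$ in which $\mathbb{E}S$ never appears; no finite number of applications of the local recursion can produce the drift $2-\mathbb{E}S$. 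Your two-step recursion has a two-parameter family of solutions indexed by $(\psi(0),\psi(1))$, and your matching step needs precisely $c_2=(1-\psi(0))+y_0(1-\psi(1))=2-\mathbb{E}S$ to turn the general solution into the stated representation. The missing ingredient is the global summation argument: sum the fundamental recursion over $u=0,\dots,v$, rearrange it into a sum of $1-\psi(k)$ against tails of the distribution of $S$, and let $v\to\infty$ using $\sum_{k\geqslant0}\mathbb{P}(S>k)=\mathbb{E}S$ together with $\psi(u)\to0$. This is exactly the step the present paper performs in \eqref{main6}--\eqref{main12} for the three-seasonal case and explicitly calls crucial. Without it the constants $c_1,c_2$ are not determined and the ratio formula for $\psi(0)$ — whose convergence, and the nonvanishing of whose denominator, you correctly flag as unproved — cannot even be set up. The secondary issues you acknowledge (existence of $\lim(\beta_{n+1}-\beta_n)/(\alpha_n-\alpha_{n+1})$) and the sketchy treatment of the degenerate cases $x_0=0$ or $y_0=0$ are real but repairable once the summation identity is in place.
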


In this paper, we consider the discrete-time risk model with three
seasons. We obtain a list of formulas similar to those in Theorem~\ref
{t2} to calculate the ultimate ruin probability in such a model. In
Section~\ref{m}, we present a precise definition of the three-seasonal
discrete-time risk model and our main statements, whereas in Sections
\ref{pa} and~\ref{pa1}, we give detailed proofs. Finally, Section~\ref
{papa} deals with some numerical examples.

\section{Main results}\label{m}

We now present the model under consideration.
\begin{defin}
We say that the insurer's surplus $ W_u(n)$ follows the
three-seasonal risk model if $W_u(n)$ is given by Eq.~\eqref{a1} for each
$n\in\mathbb{N}_0$ and the following assumptions hold:
\begin{itemize}
\item the initial insurer's surplus $u\in\mathbb{N}_0$,
\item the random claim amounts $Z_1,Z_2,\ldots$ are nonnegative integer-valued independent r.v.s,
\item for all $k\in\mathbb{N}_0$, we have $Z_{3k+1}\mathop
{=}\limits^{d}Z_1$, $Z_{3k+2}\mathop{=}\limits^{d}Z_2$, and
$Z_{3k+3}\mathop{=}\limits^{d}Z_3$.
\end{itemize}
\end{defin}

Let us define p.m.f.s and p.d.f.s by the following equalities:
\begin{align*}
&a_k=\mathbb{P}(Z_1=k), \qquad b_k=\mathbb{P}(Z_2=k), \\
&c_k=\mathbb{P}(Z_3=k), \qquad s_k=\mathbb{P}(S=k), \quad k\in\mathbb{N}_0,
\end{align*}
where  $S=Z_1+Z_2+Z_3$,
\begin{align*}
&A(x)=\sum_{k=0}^{\lfloor x\rfloor}a_k, \qquad B(x)=\sum_{k=0}^{\lfloor x\rfloor}b_k, \\
&C(x)=\sum_{k=0}^{\lfloor x\rfloor}c_k, \qquad D(x)=\sum_{k=0}^{\lfloor x\rfloor}s_k, \quad x\geqslant0.
\end{align*}

It is not difficult to see that the definitions of ruin time,
finite-time ruin probability, ultimate ruin probability, and ultimate
survival probability remain the same.
All expressions of these quantities are the same as in the homogeneous
discrete-time risk model. However, the procedures to calculate the
finite-time or the ultimate probabilities are more complex than in the
homogeneous or be-seasonal discrete-time risk models.

Our first result immediately follows from Theorem~\ref{t1}. The
obtained formulas allow us to calculate the finite-time ruin
probabilities $\psi(u,T)=\psi^{(0)}(u,T)$ in the three-seasonal risk
model for all $u\in\mathbb{N}_0$ and all $T\in\mathbb{N}$.

\begin{thm}\label{t3}
In the three-seasonal discrete-time risk model, for each $u\in
\mathbb{N}_0$, we have
\[
\psi^{(0)}(u,1)=\sum\limits_{k>u}a_k, \qquad \psi^{(1)}(u,1)=\sum\limits_{k>u}b_k, \qquad \psi^{(2)}(u,1)=\sum\limits_{k>u}c_k,
\]
and for all $u\in\mathbb{N}_0$ and $ T\in\{ 2,3, \ldots\}$, we have the
following recursive formulas:\vspace{-3mm}
\begin{align*}
\psi^{(0)}(u,T)&=\psi^{(0)}(u,1)+\sum\limits_{k=0}^{u}\psi ^{(1)}(u+1-k,T-1)a_k,\\
\psi^{(1)}(u,T)&=\psi^{(1)}(u,1)+\sum\limits_{k=0}^{u}\psi ^{(2)}(u+1-k,T-1)b_k,\\
\psi^{(2)}(u,T)&=\psi^{(2)}(u,1)+\sum\limits_{k=0}^{u}\psi^{(0)}(u+1-k,T-1)c_k.
\end{align*}
\end{thm}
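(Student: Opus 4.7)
The plan is to derive Theorem~\ref{t3} as a direct specialization of Theorem~\ref{t1} by exploiting the three-periodicity of the claim distributions. First, I would record the key observation that under the three-seasonal assumption, the per-index p.m.f.\ satisfies $z_k^{(j)} = a_k$ when $j \equiv 0 \pmod{3}$, $z_k^{(j)} = b_k$ when $j \equiv 1 \pmod{3}$, and $z_k^{(j)} = c_k$ when $j \equiv 2 \pmod{3}$, with analogous identifications for $F_Z^{(j)}$ among $A$, $B$, $C$. Since $\psi^{(j)}(u,T)$ is defined via the distribution of $(Z_{1+j},Z_{2+j},\ldots,Z_{T+j})$, and this joint law is invariant under shifting $j$ by $3$, the function $\psi^{(j)}(u,T)$ depends on $j$ only through its residue modulo $3$.

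With this periodicity in hand, the base case $T = 1$ is immediate: Theorem~\ref{t1} gives $\psi^{(j)}(u,1) = 1 - F_Z^{(j)}(u)$, which for $j = 0, 1, 2$ specializes to $\sum_{k > u} a_k$, $\sum_{k > u} b_k$, and $\sum_{k > u} c_k$, respectively, thanks to the identification of $F_Z^{(j)}$ with $A$, $B$, or $C$.

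For the recursive part, I would plug $j = 0, 1, 2$ into the recursion of Theorem~\ref{t1}. The cases $j = 0$ and $j = 1$ produce the first two asserted identities verbatim, after replacing $z_k^{(0)}$ by $a_k$ and $z_k^{(1)}$ by $b_k$. The case $j = 2$ produces a term $\psi^{(3)}(u+1-k, T-1)$ together with the weight $z_k^{(2)} = c_k$; the three-periodicity of $\psi^{(\cdot)}$ then allows me to rewrite $\psi^{(3)} = \psi^{(0)}$, yielding exactly the third formula.

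I do not anticipate any genuine obstacle in this argument: the only substantive point is the shift-invariance $\psi^{(j)}(u,T) = \psi^{(j+3)}(u,T)$, and this is a one-line consequence of the assumption $Z_{3k+i} \stackrel{d}{=} Z_i$ combined with the independence of the claim sequence, so that $(Z_{1+j}, Z_{2+j}, Z_{3+j}, \ldots)$ and $(Z_{4+j}, Z_{5+j}, Z_{6+j}, \ldots)$ have the same joint distribution. Everything else is mechanical substitution into Theorem~\ref{t1}.
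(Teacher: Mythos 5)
Your proposal is correct and follows exactly the paper's intended route: the paper offers no separate proof, stating only that Theorem~\ref{t3} ``immediately follows from Theorem~\ref{t1},'' and your specialization to $j=0,1,2$ together with the shift-invariance $\psi^{(j+3)}=\psi^{(j)}$ is precisely that deduction, spelled out.
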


Our second result describes the meaning of the net profit condition in
the three-seasonal discrete-time risk model. The proof of the theorem
is presented in Section~\ref{pa}.

\begin{thm}\label{t4}
Consider the three-seasonal discrete time risk model generated
by independent random claim amounts $Z_1$, $Z_2$, and $Z_3$. If
\,$\mathbb{E}S> 3$, then $\psi(u)=1$ for each initial surplus $u\in
\mathbb{N}_0$. If \,$\mathbb{E}S=3$, then we have the following possible cases:
\begin{itemize}
\item $\psi(0)=\psi(1)=\psi(2)=1$ and $\psi(u)=0$ for $u\in\{3,4,\ldots\}$ if $\{a_3=b_0=c_0=1\}$;
\item $\psi(0)=\psi(1)=1$ and $\psi(u)=0$ for $u\in\{2,3,\ldots\}$ if $\{a_0=b_3=c_0=1\}$, $\{a_2=b_1=c_0=1\}$, $\{a_1=b_2=c_0=1\},$ or $\{a_2=b_0=c_1=1\}$;
\item $\psi(0)=1$ and $\psi(u)=0$ for all $u\in\mathbb{N}$ if $\{a_0=b_0=c_3=1\}$, $\{a_0=b_2=c_1=1\}$, $\{a_0=b_1=c_2=1\}$, $\{a_1=b_0=c_2=1\}$, or $\{a_1=b_1=c_1=1\}$;
\item $\psi(u)=1$ for all $u\in\mathbb{N}_0=\{0,1,2,\ldots\}$ if $s_3<1$.
\end{itemize}
\end{thm}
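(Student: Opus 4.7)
My plan is to reduce the three-seasonal model to a classical i.i.d.\ random walk by sampling at times that are multiples of $3$. Set $S_j:=Z_{3j-2}+Z_{3j-1}+Z_{3j}$; these are i.i.d.\ copies of $S$, and
\[
\sum_{i=1}^{3k}(Z_i-1)=\sum_{j=1}^{k}(S_j-3)=:M_k.
\]
Since $\psi(u)=\mathbb{P}\bigl(\sup_{n\ge 1}\sum_{i=1}^{n}(Z_i-1)\ge u\bigr)$ and the supremum on the right dominates $\sup_{k\ge 1}M_k$, it will suffice, in every case where the theorem asserts $\psi(u)\equiv 1$, to prove that $\sup_k M_k=+\infty$ almost surely.

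If $\mathbb{E}S>3$, the increments $S_j-3$ have positive mean, so the strong law of large numbers gives $M_k/k\to\mathbb{E}S-3>0$ a.s.; hence $M_k\to+\infty$ and $\psi(u)=1$ for every $u\in\mathbb{N}_0$. If $\mathbb{E}S=3$ and $s_3<1$, then $S-3$ is mean-zero and nondegenerate. For such a random walk it is a classical fact (a consequence of the Chung--Fuchs recurrence theorem, or of the Hewitt--Savage zero-one law combined with a short symmetrization argument) that $\limsup_k M_k=+\infty$ a.s., and the same conclusion follows.

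The remaining subcase is $\mathbb{E}S=3$ together with $s_3=1$, i.e.\ $Z_1+Z_2+Z_3=3$ almost surely. Because the $Z_i$ are independent and nonnegative integer valued, any $Z_i$ taking two distinct values with positive probability would, combined with a positive-probability realisation of the other two coordinates, produce two different sums; hence each $Z_i$ must be a.s.\ constant, $Z_i\equiv z_i$, with $(z_1,z_2,z_3)$ ranging over the ten compositions of $3$ into three nonnegative integer parts. In every such deterministic scenario the surplus is $3$-periodic with $W_u(3k)=u$, so ruin can occur only inside the first cycle; comparing $W_u(1)=u+1-z_1$ and $W_u(2)=u+2-z_1-z_2$ with $0$ for each of the ten triples sorts them into the three bullet lists of the theorem.

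The delicate step I would write out most carefully is the oscillation property of the mean-zero nondegenerate random walk used in the $s_3<1$ case; the other two situations reduce either to a one-line application of the strong law or to a completely finite trajectory-by-trajectory verification.
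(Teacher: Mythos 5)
Your proposal is correct, but it reaches the conclusion by a genuinely different route from the paper. The paper never passes to the embedded random walk: it conditions on the first three claims to obtain a functional equation for $\varphi(u)$ (its Eq.~(\ref{main5})), sums that equation over $u$, and passes to the limit to derive the identity
\[
\varphi(\infty)\,(3-\mathbb{E}S)=\varphi(0)+b_0c_0\varphi(2)+b_0c_1\varphi(1)+c_0\varphi(1),
\]
from which the case $\mathbb{E}S>3$ follows by a sign argument ($\varphi(\infty)=0$ plus monotonicity of $\varphi$), and the case $\mathbb{E}S=3$, $s_3<1$ follows by splitting according to which of $s_0,s_1,s_2$ is the first nonzero one, forcing $\varphi(0)=\varphi(1)=\varphi(2)=0$ and then propagating $\varphi(u)=0$ through the recursion. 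You instead collapse the three-season cycle into the i.i.d.\ walk $M_k=\sum_{j\le k}(S_j-3)$, dispose of $\mathbb{E}S>3$ by the strong law, and of $\mathbb{E}S=3$, $s_3<1$ by the oscillation property $\limsup_k M_k=+\infty$ a.s.\ of a mean-zero nondegenerate integer walk (Chung--Fuchs); the degenerate case $s_3=1$ you handle essentially as the paper does, by observing that independence forces each $Z_i$ to be constant and checking the ten compositions of $3$ trajectory by trajectory (your reduction to the first cycle via $W_u(3k)=u$ is sound, and your sorting of the ten triples matches the three bullet lists). Your argument is shorter, more conceptual, and handles $\mathbb{E}S=\infty$ with no extra care, at the cost of importing a classical recurrence theorem; the paper's heavier computation is not wasted, since the identity and the recursion it establishes here are exactly the tools reused to prove Theorem~\ref{t5}. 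The one step you should indeed write out carefully is the one you flagged: that the step distribution $S-3$ is mean-zero and not almost surely zero precisely because $\mathbb{E}S=3$ and $s_3<1$, so the oscillation theorem applies.
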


Our last statement proposes a recursive procedure for calculation of
the ultimate survival probabilities $\varphi(u)=1-\psi(u),\,u\in\mathbb{N}_0$.
The proof of the formulas is given in
Section~\ref{pa1}.

\begin{thm}\label{t5}
Consider the three-seasonal discrete-time risk model generated
by independent random claim amounts $Z_1$, $Z_2$, and $Z_3$. Denote
$S=Z_1+Z_2+Z_3$, $s_n=\mathbb{P}(S=n)$ for $n\in\mathbb{N}_0$, and
suppose that $\mathbb{E}S<3$. Then the following statements hold.
\begin{itemize}
\item $\lim\limits_{u\rightarrow\infty}\varphi(u)=1$.
\item If
$s_0\neq0$, then
\begin{equation}
\label{opop} \varphi(n)=\alpha_n\varphi(0)+\beta_n
\varphi(1)+\gamma_n(3-\mathbb {E}S), \quad n\in\mathbb{N}_0,
\end{equation}
where
\begin{equation*}
\begin{cases}\alpha_0=1, \qquad  \alpha_1=0, \qquad \alpha_2=-\frac{1}{b_0c_0},\\[2pt]
\alpha_n = \frac{1}{s_0} (\alpha_{n-3}-\sum_{k=1}^{n-1}s_k\alpha_{n-k}-a_{n-2} ), \quad n \geqslant3;
\end{cases} %
\end{equation*}
\begin{equation*}
\begin{cases}
\beta_0=0, \qquad \beta_1=1, \qquad \beta_2=-\frac{c_1}{c_0}-\frac{1}{b_0},\\[2pt]
\beta_n = \frac{1}{s_0} (\beta_{n-3}-\sum_{k=1}^{n-1}s_k\beta_{n-k}-a_{n-2}c_0
+c_0\sum_{k=0}^{n-1}a_kb_{n-1-k} ), \quad n \geqslant3;
\end{cases} %
\end{equation*}
\begin{equation*}
\begin{cases}
\gamma_0=0, \qquad \gamma_1=0, \qquad \gamma_2=\frac{1}{b_0c_0},\\[2pt]
\gamma_n = \frac{1}{s_0} (\gamma_{n-3}-\sum
_{k=1}^{n-1}s_k\gamma_{n-k}+a_{n-2} ), \quad n \geqslant3.
\end{cases} %
\end{equation*}
\item If $\{a_0=0,b_0\neq0, c_0\neq0, a_1\neq0\}$, then
\begin{equation}
\label{4+} %
\begin{cases}\varphi(0)=0,\\[2pt]
\varphi(n)=\hat{\beta}_n\varphi(1)+\hat{\gamma}_n(3-\mathbb{E}S), \quad n\in\mathbb{N},
\end{cases} %
\end{equation}
where
\begin{equation*}
\begin{cases}
\hat{\beta}_1=\beta_1, \qquad \hat{\beta}_2=\beta_2, \qquad \hat{\gamma}_1=\gamma_1, \qquad \hat{\gamma}_2=\gamma_2,\\[2pt]
\hat{\beta}_n=\frac{1}{s_1} (\hat{\beta}_{n-2}-\sum_{k=2}^{n}s_k\hat{\beta}_{n-k+1}\\[2pt]
\qquad\ -\,a_{n-1}c_0-c_0\varphi(1)\sum_{k=1}^{n}a_kb_{n-k} ), \quad n\geqslant3,\\[3pt]
\hat{\gamma}_n=\frac{1}{s_1} (\hat{\gamma}_{n-2}-\sum_{k=2}^{n}s_k\hat{\gamma}_{n-k+1}+a_{n-1} ), \quad n\geqslant3.
\end{cases} %
\end{equation*}
\item If $\{a_0\neq0,b_0=0, c_0\neq0, b_1\neq0\}$ then
\begin{equation}
\label{4++} \varphi(n)=\tilde{{\alpha}}_n\varphi(0)+\tilde{{
\gamma}}_n(3-\mathbb {E}S), \quad n\in\mathbb{N},
\end{equation}
where
\begin{equation*}
\begin{cases}
\tilde{{\alpha}}_1=-1/c_0, \qquad \tilde{\alpha}_2=c_1/c_0^2+1/(a_0b_1c_0), \\[2pt]
\tilde{\gamma}_1=1/c_0, \qquad \tilde{\gamma}_2=-c_1/c_0^2,\\[2pt]
\tilde{\alpha}_n=\frac{1}{s_1} (\tilde{\alpha}_{n-2}-\sum_{k=2}^{n}s_k\tilde{\alpha}_{n-k+1}-\sum_{k=0}^{n-1}a_kb_{n-k} ), \quad n\geqslant3,\\[2pt]
\tilde{\gamma}_n=\frac{1}{s_1} (\tilde{\gamma}_{n-2}-\sum_{k=2}^{n}s_k\tilde{\gamma}_{n-k+1}+\sum_{k=0}^{n-1}a_kb_{n-k} ), \quad n\geqslant3.
\end{cases} %
\end{equation*}
\item If $\{a_0\neq0,b_0\neq0, c_0=0, c_1\neq0\}$, then
\begin{equation}
\label{4+++} \varphi(n)=\breve{{{\alpha}}}_n\varphi(0)+\breve{{{
\gamma}}}_n(3-\mathbb {E}S), \quad n\in\mathbb{N}_0,
\end{equation}
where
\begin{equation*}
\begin{cases}
\breve{\alpha}_0=1, \qquad \breve{{\alpha}}_1=-1/(b_0c_1), \qquad \breve{\gamma}_0=0, \qquad \breve{\gamma}_1=1/(b_0c_1),\\[2pt]
\breve{\alpha}_n=\frac{1}{s_1} (\breve{\alpha}_{n-2}-\sum_{k=2}^{n}s_k\breve{\alpha}_{n-k+1}-\sum_{k=0}^{n-1}a_kb_{n-k} ), \quad n\geqslant2,\\[2pt]
\breve{\gamma}_n=\frac{1}{s_1} (\breve{\gamma}_{n-2}-\sum_{k=2}^{n}s_k\breve{\gamma}_{n-k+1}+\sum_{k=0}^{n-1}a_kb_{n-k} ), \quad n\geqslant2.
\end{cases} %
\end{equation*}
\item If $\{a_0=0,b_0=0, c_0\neq0\}$, then $\varphi(0)=0$,
$\varphi(1)=(3-\mathbb{E}S)/c_0$, and
\begin{align}
\label{4A} \varphi(u+1)&=\frac{1}{s_2} \Biggl((1-s_3)\varphi(u)-\sum\limits_{k=1}^{u-1}\varphi(k)s_{u+3-k}\nonumber\\
&\quad +c_0\varphi(1)\sum\limits_{k=0}^{u+2}a_kb_{u+2-k}\Biggr), \quad u\in\mathbb{N}.
\end{align}
\item If $\{a_0=0,b_0\neq0, c_0=0\}$, then $\varphi
(0)=s_2\varphi(1)$, $\varphi(1)=(3-\mathbb{E}S)/(s_2+b_0c_1)$, and
\begin{align}
\label{4B} \varphi(u+1)&=\frac{1}{s_2} \Biggl((1-s_3)\varphi(u)-\sum\limits_{k=1}^{u-1}\varphi(k)s_{u+3-k}\nonumber\\
&\quad +a_{u+1}b_0c_1\varphi(1)\Biggr), \quad u\in\mathbb{N}.
\end{align}
\item If $\{a_0\neq0,b_0=0, c_0=0\}$, then $\varphi
(0)=3-\mathbb{E}S $, $\varphi(1)=(3-\mathbb{E}S)/s_2$, and
\begin{eqnarray}
\label{4C} &&\varphi(u+1)=\frac{1}{s_2} \Biggl((1-s_3)
\varphi(u)+\sum\limits
_{k=1}^{u-1}\varphi(k)s_{u+3-k}
\Biggr), \quad u\in\mathbb{N}.
\end{eqnarray}
\item If $\{a_0=a_1=0, b_0\neq0, c_0\neq0\}$, then $\varphi
(0)=0$, $\varphi(1)=(3-\mathbb{E}S)/(1/a_2+c_0)$, and the recursion
formula \eqref{4A} is satisfied.
\item If $\{a_0\neq0, b_0=b_1=0, c_0\neq0\}$, then $\varphi
(0)=0$, $\varphi(1)=(3-\mathbb{E}S)/c_0$, and the same recursion
formula \eqref{4A} holds.
\item If $\{a_0\neq0, b_0\neq0, c_0=c_1=0\}$, then $\varphi
(0)=3-\mathbb{E}S$, $\varphi(1)=(3-\mathbb{E}S)/s_2$, and the recursion
formula \eqref{4C} is satisfied.
\end{itemize}
\end{thm}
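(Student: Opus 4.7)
The plan is to reduce the entire theorem to one fundamental three-step survival equation, obtained by conditioning on the first triple of claims, and then to read off each assertion from it case by case. Exploiting three-seasonality --- so that, after surviving three steps with residual surplus $u+3-Z_1-Z_2-Z_3>0$, the remaining process is a probabilistic copy of the original model --- and keeping the intermediate requirements $W_u(1), W_u(2) > 0$ through truncated inner summation ranges, one obtains
\begin{equation*}
\varphi(u) = \sum_{i=0}^{u} a_i \sum_{j=0}^{u+1-i} b_j \sum_{k=0}^{u+2-i-j} c_k\, \varphi(u+3-i-j-k).
\end{equation*}
Grouping the triple sum by $m = i+j+k$, the coefficient of $\varphi(u+3-m)$ equals $s_m$ for $m \le u$ and differs from $s_m$ only at $m=u+1$ (by $-a_{u+1}b_0c_0$) and at $m=u+2$ (by $-a_{u+1}b_0c_1 - c_0 \sum_{i=0}^{u+2} a_i b_{u+2-i}$), the corrections being the configurations that violate $i \le u$ or $i+j \le u+1$.

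The first assertion, $\lim_{u\to\infty}\varphi(u)=1$, will be handled by applying the strong law of large numbers to the i.i.d.\ block sums $S^{(k)} := Z_{3k-2}+Z_{3k-1}+Z_{3k}$: since $\mathbb{E}S^{(k)} = \mathbb{E}S < 3$, the embedded random walk $W_u(3k) = u + 3k - \sum_{\ell=1}^{k}S^{(\ell)}$ tends to $+\infty$ a.s., and a standard control of the maximal intra-period drop (bounded in $L^1$ by $\mathbb{E}S$) upgrades this to $\psi(u) \to 0$.

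For the generic case $s_0 \ne 0$, the fundamental recursion can be solved for $\varphi(u+3)$, producing a forward linear three-step recurrence. The decomposition \eqref{opop} is then verified by induction on $n$: the bases $n=0,1$ are tautological, and the base $n=2$ is the content of the auxiliary identity
\begin{equation*}
\varphi(0) + (b_0c_1+c_0)\varphi(1) + b_0c_0\varphi(2) = 3 - \mathbb{E}S,
\end{equation*}
which is the mechanism by which the drift factor $(3-\mathbb{E}S)$ enters the formulas. This identity is obtained by substituting the ansatz $\varphi(n) = \alpha_n\varphi(0) + \beta_n\varphi(1) + \gamma_n(3-\mathbb{E}S)$, with the prescribed recursions defining $\alpha_3,\beta_3,\gamma_3$, into the $u=0$ instance of the $\varphi$-recursion and matching coefficients: the coefficient of $\varphi(0)$ forces $\alpha_2 = -1/(b_0c_0)$, the coefficient of $(3-\mathbb{E}S)$ forces $\gamma_2 = 1/(b_0c_0)$, and the coefficient of $\varphi(1)$ forces $\beta_2 = -c_1/c_0 - 1/b_0$. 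The inductive step $n \ge 3$ is then a direct substitution of the ansatz into the three-step recurrence and comparison term by term.

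The remaining items \eqref{4+}--\eqref{4C} cover the degenerate configurations $s_0 = 0$. In each case, one or more of the initial values $\varphi(0), \varphi(1)$ is forced by the model structure (for example, $a_0 = 0$ gives $\varphi(0) = 0$ by immediate ruin at $u=0$; $a_0 = b_0 = 0$ forces an explicit linear link between $\varphi(0)$ and $\varphi(1)$), and the dominant coefficient in the three-step recurrence shifts from $s_0$ to $s_1$ or even $s_2$. The induction scheme is then re-run after recomputing the boundary corrections and writing down the corresponding specialization of the auxiliary identity. The main obstacle is precisely this bookkeeping: keeping strict track of the boundary terms when the constraints $i \le u$, $i+j \le u+1$ become active at small values of $m$ (and which worsens when further low-index probabilities vanish), together with producing the correct instance of the auxiliary identity in each sub-case; once these are in place, the algebraic verification of the recursions for $\hat\beta_n, \hat\gamma_n, \tilde\alpha_n, \tilde\gamma_n, \breve\alpha_n, \breve\gamma_n$ is routine.
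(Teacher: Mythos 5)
Your fundamental three-step equation is correct (after regrouping by $m=i+j+k$ it is exactly the paper's relation \eqref{main5}), the SLLN argument for $\lim_{u\to\infty}\varphi(u)=1$ matches the paper's, and the inductive verification of the recursions for $\alpha_n,\beta_n,\gamma_n$ and their variants is indeed routine once the base cases are in place. The genuine gap is in how you claim to obtain the auxiliary identity
\begin{equation*}
\varphi(0)+(b_0c_1+c_0)\varphi(1)+b_0c_0\varphi(2)=3-\mathbb{E}S .
\end{equation*}
You assert it follows ``by substituting the ansatz into the $u=0$ instance of the $\varphi$-recursion and matching coefficients.'' This cannot work and is circular. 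The recurrence \eqref{main5} is a forward linear recursion: for $s_0\neq0$ it determines $\varphi(n)$, $n\geqslant3$, from the three free initial values $\varphi(0),\varphi(1),\varphi(2)$, and any single finite instance of it (in particular $u=0$) is an identity among $\varphi(0),\dots,\varphi(3)$ that contains no trace of $\mathbb{E}S$; matching the coefficient of $(3-\mathbb{E}S)$ there is vacuous, and the claim that it ``forces'' $\alpha_2=-1/(b_0c_0)$, $\gamma_2=1/(b_0c_0)$, $\beta_2=-c_1/c_0-1/b_0$ is precisely the assertion of the identity you are trying to prove (the $n=2$ case of \eqref{opop} \emph{is} that identity, solved for $\varphi(2)$). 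The constraint linking the initial values to the drift is an additional boundary condition at infinity, and it must be extracted from the recursion globally: in the paper this is done by summing \eqref{main5} over $u=0,\dots,v$, rearranging the resulting double sums, and letting $v\to\infty$ using monotonicity and boundedness of $\varphi$ together with $\sum_{k\geqslant0}(1-D(k))=\mathbb{E}S$, which yields $\varphi(\infty)(3-\mathbb{E}S)=\varphi(0)+b_0c_0\varphi(2)+b_0c_1\varphi(1)+c_0\varphi(1)$ (relation \eqref{main12}), and then $\varphi(\infty)=1$ is substituted. Without this Abel-summation/limiting step your proof has no source for the factor $3-\mathbb{E}S$, and the same omission undermines all the degenerate cases as well, since the initial values such as $\varphi(1)=(3-\mathbb{E}S)/c_0$ or $\varphi(1)=(3-\mathbb{E}S)/s_2$ are likewise consequences of \eqref{main12}, not of the recursion alone.
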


We observe that all formulas presented in Theorem~\ref{t5} can be used
to cal\-culate numerical values of survival or ruin probabilities for
an arbitrary three-seasonal risk model and for an arbitrary initial
surplus value $u$. The algorithms based on the derived relations work
quite quickly and accurately. A few numerical examples for calculating
ruin probability in the various versions of the three-seasonal risk
model are presented in Section~\ref{papa}.

\section{Net profit condition}\label{pa}

In this section, we present a proof of Theorem~\ref{t4}. We recall that
we denote the ultimate survival probability by $\varphi(u)$.

\begin{proof}[Proof of Theorem~\ref{t4}]
For an arbitrary $u\in\mathbb
{N}_0$, we have
\begin{align}
\label{main eq} \varphi(u)&=\mathbb{P} \Biggl(\bigcap\limits_{n=1}^{\infty}\Biggl\{u+n-\sum_{i=1}^{n}Z_i>0\Biggr\} \Biggr)\nonumber\\
&= \mathbb{P} \Biggl(\bigcap\limits_{n=3}^{\infty} \Biggl\{u+n-\sum_{i=1}^{n}Z_i>0 \Biggr\} \cap\{Z_1 \geqslant u+1 \}\cap\{Z_1+Z_2\geqslant u+2 \} \Biggr)\nonumber\\
&\quad - \mathbb{P} \Biggl(\bigcap\limits_{n=3}^{\infty} \Biggl\{u+n-\sum_{i=1}^{n}Z_i>0 \Biggr\} \cap\{Z_1 \geqslant u+1 \} \Biggr)\nonumber\\
&\quad - \mathbb{P} \Biggl(\bigcap\limits_{n=3}^{\infty} \Biggl\{u+n-\sum_{i=1}^{n}Z_i>0 \Biggr\} \cap\{Z_1+Z_2 \geqslant u+2\} \Biggr)\nonumber\\
&\quad +\mathbb{P} \Biggl(\bigcap\limits_{n=3}^{\infty} \Biggl\{u+n-\sum_{i=1}^{n}Z_i>0 \Biggr\}\Biggr).
\end{align}
Since the model is three-seasonal, the last probability in (\ref{main
eq}) can be expressed by the sum
\begin{align}
\label{main1}
\sum_{k=0}^{u+2}s_k\ \mathbb{P} \Biggl(\bigcap\limits_{n=1}^{\infty} \Biggl\{u+n+3-k-\sum_{i=1}^{n}Z_i>0\Biggr\} \Biggr)=\sum_{k=0}^{u+2}s_k \ \varphi(u+3-k),
\end{align}
where, as before, $s_k=\mathbb{P}(Z_1+Z_2+Z_3=k)$ for $ k\in\mathbb
{N}_0$.\\
The second probability in (\ref{main eq}) is equals
\begin{align}
\label{main2}
&\sum_{k=u+1}^{\infty}a_k\ \mathbb{P} \Biggl(\bigcap\limits_{n \geqslant3} \Biggl\{u+n-k-Z_2-Z_3-\sum_{i=4}^{n}Z_i>0 \Biggr\}\Biggr)\nonumber\\
&\quad =a_{u+2} \ \mathbb{P}(Z_2+Z_3=0) \ \mathbb{P}\Biggl(\bigcap\limits_{n=4}^{\infty}\Biggl\{n-2-\sum_{i=4}^{n}Z_i>0 \Biggr\} \Biggr)\nonumber\\
&\qquad +a_{u+1} \ \mathbb{P}(Z_2+Z_3=0) \ \mathbb{P}\Biggl(\bigcap\limits_{n=4}^{\infty}\Biggl\{n-1-\sum_{i=4}^{n}Z_i>0 \Biggr\} \Biggr)\nonumber\\
&\qquad +a_{u+1} \ \mathbb{P}(Z_2+Z_3=1) \ \mathbb{P}\Biggl(\bigcap\limits_{n=4}^{\infty}\Biggl\{n-2-\sum_{i=4}^{n}Z_i>0 \Biggr\} \Biggr)\nonumber\\
&\quad =a_{u+2}b_0c_0\varphi(1)+a_{u+1}b_0c_0\varphi(2)+a_{u+1}b_0c_1\varphi(1)+a_{u+1}b_1c_0\varphi(1).
\end{align}
Similarly, the third probability in (\ref{main eq}) is
\begin{align}
\label{main3} &
\mathbb{P}(Z_1+Z_2=u+2) \ \mathbb{P}(Z_3=0) \ \mathbb {P} \Biggl(\bigcap \limits_{n=4}^{\infty}\Biggl\{n-2-\sum_{i=4}^{n}Z_i>0\Biggr\} \Biggr)\nonumber\\
&\quad =c_0\varphi(1)\sum_{k=0}^{u+2}a_kb_{u+2-k},
\end{align}
and, finally, the first probability in (\ref{main eq}) is
\begin{align}
\label{main4}
&\mathbb{P}(Z_1=u+1) \ \mathbb{P}(Z_2=1)\ \mathbb{P}(Z_3=0) \ \mathbb {P} \Biggl(\bigcap\limits_{n=4}^{\infty}\Biggl\{n-2-\sum_{i=4}^{n}Z_i>0\Biggr\} \Biggr)\nonumber\\
&\qquad +\mathbb{P}(Z_1=u+2) \ \mathbb{P}(Z_2=0) \ \mathbb {P}(Z_3=0) \ \mathbb{P} \Biggl(\bigcap\limits_{n=4}^{\infty}\Biggl\{n-2-\sum_{i=4}^{n}Z_i>0\Biggr\} \Biggr)\nonumber\\
&\quad =a_{u+1}b_1c_0\varphi(1)+a_{u+2}b_0c_0\varphi(1).
\end{align}
Substituting (\ref{main1})--(\ref{main4}) into (\ref{main eq}), we get that
\begin{align}
\label{main5}
\varphi(u)&=\sum_{k=0}^{u+2}s_k\ \varphi(u+3-k)-a_{u+1}b_0c_0\varphi (2)\nonumber\\
&\quad -a_{u+1}b_0c_1\varphi(1)-c_0\varphi(1)\sum_{k=0}^{u+2}a_k b_{u+2-k}
\end{align}
for all $u\in\mathbb{N}_0$.

Therefore, for $v\in\mathbb{N}_0$, we have
\begin{align}
\label{main6}
\sum_{u=0}^{v}\varphi(u)&=\sum_{u=0}^{v}\sum_{k=0}^{u+2}s_k\varphi (u+3-k)\nonumber\\
&\quad -b_0c_0\varphi(2) \bigl({A}(v+1)-a_0\bigr) -b_0c_1\varphi(1) \bigl({A}(v+1)-a_0\bigr)\nonumber\\
&\quad -c_0\varphi (1)\sum_{u=0}^{v}\sum_{k=0}^{u+2}a_k b_{u+2-k}.
\end{align}
We observe that the sum
\[
\sum_{u=0}^{v}\sum
_{k=0}^{u+2}a_k b_{u+2-k}
\]
can be rewritten in the form
\begin{align}
\label{main7}
&a_0\sum_{u=0}^{v}b_{u+2}+a_1\sum_{u=0}^{v}b_{u+1}+a_2\sum_{u=0}^{v}b_{u}+\sum_{k=3}^{v+2}a_{k}\sum_{u=k-2}^{v}b_{u+2-k}\nonumber\\
&\quad =a_0\bigl({B}(v+2)-b_0-b_1\bigr)+a_1\bigl({B}(v+1)-b_0\bigr)+a_2{B}(v)\nonumber\\
&\qquad +\sum_{k=3}^{v+2}a_{k} {B}(v+2-k)\nonumber\\
&\quad =\sum_{k=0}^{v+2}a_{k} {B}(v+2-k)-a_0b_0-a_0b_1-a_1b_0.
\end{align}
Similarly, the sum
\[
\sum_{u=0}^{v}\sum
_{k=0}^{u+2}s_k\varphi(u+3-k)
\]
equals
\begin{align}
\label{main8} &\sum_{k=1}^{v+3}\varphi(k)
D(v+3-k)-s_0\varphi(1)-s_1\varphi (1)-s_0
\varphi(2),
\end{align}
where\vspace{-2mm}
\[
{D}(x)=\sum_{k=0}^{\lfloor x\rfloor}s_k=\sum
\limits
_{k=0}^{\lfloor
x\rfloor}\mathbb{P}(Z_1+Z_2+Z_3=k).
\]
Relations (\ref{main6}), (\ref{main7}), and (\ref{main8}) imply that
\begin{align*}
\sum_{k=0}^{v}\varphi(k)&=\sum_{k=1}^{v+3}\varphi(k) D(v+3-k)\\
&\quad -b_0c_0\varphi(2)A(v+1)-b_0c_1\varphi(1)A(v+1)\\
&\quad -c_0\varphi(1)\sum_{k=0}^{v+2}a_k B(v+2-k)
\end{align*}
or, equivalently,\vspace{-2mm}
\begin{align}
\label{main9}
&\sum_{k=0}^{v+3}\varphi(k)\bigl(1-D(v+3-k)\bigr)\nonumber\\
&\quad =\varphi(v+1)+\varphi (v+2)+\varphi(v+3)-\varphi(0)D(v+3)-b_0c_0\varphi(2)A(v+1)\nonumber\\
&\qquad -b_0c_1\varphi (1)A(v+1) +c_0\varphi(1) \Biggl( \sum_{k=0}^{v+2}a_k \overline {B}(v+2-k)+A(v+2)\Biggr).
\end{align}
For each $K\in[1,v+2),$ we have
\begin{align*}
\sum_{k=0}^{v+2}a_k
\overline{B}(v+2-k)=\sum_{k=0}^{K}a_k
\overline {B}(v+2-k)+\sum_{k=K+1}^{v+2}a_k
\overline{B}(v+2-k).
\end{align*}
Therefore,\vspace{-4mm}
\begin{align*}
\limsup\limits
_{v\rightarrow\infty}\sum_{k=0}^{v+2}a_k
\overline {B}(v+2-k) \leqslant\sum_{k=K+1}^{\infty}a_k
\end{align*}
for each $K \geqslant1$, and so\vspace{-2mm}
\begin{align}
\label{main10} \lim\limits
_{v\rightarrow\infty}\sum_{k=0}^{v+2}a_k
\overline{B}(v+2-k)=0.
\end{align}

The sequence $\varphi(u), u \in\mathbb{N}_0$, is bounded and
nondecreasing. So, the limit
$\varphi(\infty):=
\lim\limits_{u\rightarrow\infty}\varphi(u).
$
Similarly to the derivation of (\ref{main10}), we can get that
\begin{align*}
\limsup\limits
_{v\rightarrow\infty}\sum_{k=0}^{v+3}\bigl(
\varphi(\infty )-\varphi(k)\bigr) \bigl(1-D(v+3-k)\bigr) \leqslant\sup
\limits_{k \geqslant
K+1}
\bigl(\varphi(\infty)-\varphi(k)\bigr) \mathbb{E}S
\end{align*}
for each fixed $K \geqslant1$. Therefore,
\begin{align}
\label{main11}
\lim\limits_{v\rightarrow\infty}\sum_{k=0}^{v+3}\varphi (k) \bigl(1-D(v+3-k)\bigr)&=\varphi(\infty)\lim\limits_{v\rightarrow\infty}\sum_{k=0}^{v+3}\bigl(1-D(k)\bigr)\nonumber\\
&=\varphi(\infty)\mathbb{E}S.
\end{align}
Relations (\ref{main9}), (\ref{main10}), and (\ref{main11}) imply that
\begin{align}
\label{main12} \varphi(\infty) (3-\mathbb{E}S)=\varphi(0)+b_0c_0
\varphi(2)+b_0c_1\varphi (1)+c_0\varphi(1).
\end{align}
Now we consider the last equality and examine all possible cases.
\begin{description}
\item[(I)] If $\mathbb{E}S>3$, then (\ref{main12}) implies that
$\varphi(\infty)=0$ because the left side of (\ref{main12}) is
nonnegative in all cases. So, in this case, $\psi(u)=1$ for all $u \in
\{0, 1, 2, \ldots\}$.

\item[(II)] If $\mathbb{E}S=3$, then (\ref{main12}) implies that
\begin{align*}
\varphi(0)+b_0c_0\varphi(2)+b_0c_1
\varphi(1)+c_0\varphi(1)=0.
\end{align*}
Additionally, in this situation we have that $s_3=1$ or $s_3<1$.

\item[(II-A)] If $s_3=1$, then we have
\begin{equation*}
\begin{cases}
a_0b_0c_0=0,\\
a_1b_0c_0+a_0b_1c_0+a_0b_0c_1=0, \\
a_0b_0c_2+a_0b_2c_0+a_2b_0c_0+a_1b_1c_0+a_0b_1c_1+a_1b_0c_1=0, \\
a_0b_0c_3+a_0b_3c_0+a_3b_0c_0+a_1b_2c_0+a_2b_1c_0\\
\hspace{30pt}+a_0b_1c_2+a_0b_2c_1+a_1b_0c_2+a_2b_0c_1+a_1b_1c_1=1,\\
\varphi(0)+b_0c_0\varphi(2)+b_0c_1\varphi(1)+c_0\varphi(1)=0.
\end{cases} %
\end{equation*}
\end{description}
Taking into account that all numbers $a_k$, $b_k$, and $c_k$ are local
probabilities for all $k\in\mathbb{N}_0$, the last system implies the
following possible cases.
\begin{description}
\item[(a)] $\{a_3=b_0=c_0=1\}$ and $\varphi(0)=\varphi
(1)=\varphi(2)=0$.
In this case, $\psi(0)=\psi(1)=\psi(2)=1$ and $\psi(u)=0, u \in\{3, 4,
\ldots\}$ because
\begin{equation*}
W_u(n)= %
\begin{cases}
u-2  &\text{ if} \ \  n\equiv1 \ \text{mod} \ 3,\\
u-1  &\text{ if} \ \  n\equiv2 \ \text{mod} \ 3, \\
u    &\text{ if} \ \  n\equiv0 \ \text{mod} \ 3.
\end{cases} %
\end{equation*}
\item[(b)] $\{a_0=b_3=c_0=1\}$ and $\varphi(0)=\varphi(1)=0$.
In this case, $\psi(0)=\psi(1)=1$ and $\psi(u)=0$ for $u \in\{2, 3,
\ldots\}$ because
\begin{equation*}
W_u(n)= %
\begin{cases}
u+1 &\text{ if} \ \  n\equiv1 \ \text{mod} \ 3,\\
u-1 &\text{ if} \ \  n\equiv2 \ \text{mod} \ 3, \\
u   &\text{ if} \ \  n\equiv0 \ \text{mod} \ 3.
\end{cases} %
\end{equation*}
\item[(c)] $\{a_0=b_0=c_3=1\}$ and $\varphi(0)=0$.
In this case, $\psi(0)=1$ and $\psi(u)=0$ for $u \in\{1,2, \ldots\}$
because
\begin{equation*}
W_u(n)= %
\begin{cases}
u+1  &\text{ if} \ \  n\equiv1 \ \text{mod} \ 3,\\
u+2  &\text{ if} \ \  n\equiv2 \ \text{mod} \ 3, \\
u    &\text{ if} \ \  n\equiv0 \ \text{mod} \ 3.
\end{cases} %
\end{equation*}
\item[(d)] $\{a_2=b_1=c_0=1\}$ and $\varphi(0)=\varphi(1)=0$.
In this case, $\psi(0)=\psi(1)=1$ and $\psi(u)=0$ for $u \in\{2,3,
\ldots\}$.
\item[(e)] $\{a_1=b_2=c_0=1\}$ and $\varphi(0)=\varphi(1)=0$.
In this case, $\psi(0)=\psi(1)=1$ and $\psi(u)=0$ for $u \in\{2,3,
\ldots\}$.
\item[(f)] $\{a_0=b_2=c_1=1\}$ and $\varphi(0)=0$.
In this case, $\psi(0)=1$ and $\psi(u)=0$ for $u \in\{1,2, \ldots\}$.
\item[(g)] $\{a_0=b_1=c_2=1\}$ and $\varphi(0)=0$.
In this case, $\psi(0)=1$ and $\psi(u)=0$ for $u \in\{1,2, \ldots\}$.
\item[(h)] $\{a_2=b_0=c_1=1\}$ and $\varphi(0)=\varphi(1)=0$.
In this case, $\psi(0)=\psi(1)=1$ and $\psi(u)=0$ for $u \in\{2,3,
\ldots\}$.
\item[(i)] $\{a_1=b_0=c_2=1\}$ and $\varphi(0)=0$.
In this case, $\psi(0)=1$ and $\psi(u)=0$ for $u \in\{1,2, \ldots\}$.
\item[(j)] $\{a_1=b_1=c_1=1\}$ and $\varphi(0)=0$.
In this case, $\psi(0)=1$ and $\psi(u)=0$ for $u \in\{1,2, \ldots\}$.
\item[(II-B)] If $s_3<1$ and $\mathbb{E}S=3$, then it is necessary
that $s_0 \neq0$ or $s_1 \neq0$ or $s_2 \neq0$ because, on the
contrary, $\mathbb{E}S=3s_3+4s_4+5s_5+ \dots>3(s_3+s_4+ \cdots)=3$.
In this situation, it suffices to consider the following cases:
\[
\{s_0 \neq0\}, \qquad \{s_0=0, s_1 \neq0\}, \qquad \{s_0=0, s_1=0, s_2 \neq 0\}.
\]
\item[(k)] If $s_0=a_0b_0c_0 \neq0$, then (\ref{main12})
implies that $\varphi(0)=\varphi(1)=\varphi(2)=0$, and from (\ref
{main5}) we obtain $\varphi(u)=0$ for all $u \in\{3,4, \ldots\}$. So,
$\psi(u)=1$ if $u \in\{0,1, \ldots\}$ in this case.
\item[(l)] If $s_0=a_0b_0c_0 = 0$ and
$s_1=a_0b_0c_1+a_0b_1c_0+a_1b_0c_0 \neq0,$ then we have the following
possible cases.
\item[(l-1)] $\{a_0=0$, $a_1 \neq0$, $b_0 \neq0$, $c_0 \neq
0\}$.
In this situation, Eq.~(\ref{main12}) implies that $\varphi(0)=\varphi
(1)=\varphi(2)=0$, whereas (\ref{main5}) implies that $\varphi(u)=0$
for all $u \in\{3,4, \ldots\}$. So, $\psi(u)=1$ for all $u \in\{0,1,
\ldots\}$ in this case.
\item[(l-2)] $\{a_0 \neq0$, $b_0=0$, $b_1 \neq0$, $c_0 \neq
0\}$.
In this situation, Eq.~(\ref{main12}) implies that $\varphi(0)=\varphi
(1)=0$, and (\ref{main5}) implies that $\varphi(u)=0$ for all $u \in\{
2,3, \ldots\}$. Therefore, $\psi(u)=1$ for all $u \in\{0,1, \ldots\}
$ again.
\item[(l-3)] $\{a_0 \neq0$, $b_0 \neq0$, $c_0=0$, $c_1 \neq
0\}$.

Equality (\ref{main12}) implies that $\varphi(0)=\varphi(1)=0$, and
(\ref{main5}) implies that $\varphi(u)=0$ for all $u \in\{2,3, \ldots
\}$. So, in this case, $\psi(u)=1$ for all $u \in\{0,1, \ldots\}$.
\item[(m)] If $s_0=a_0b_0c_0 = 0$,
$s_1=a_0b_0c_1+a_0b_1c_0+a_1b_0c_0=0$ and
$s_2=a_0b_0c_2+a_0b_2c_0+a_2b_0c_0+a_1b_1c_0+a_1b_0c_1+a_0b_1c_1 \neq
0$, then there exist the following possible cases.
\item[(m-1)] $\{a_0=0$, $a_1=0$, $a_2 \neq0$, $b_0 \neq0$,
$c_0 \neq0\}$;
\item[(m-2)] $\{a_0 \neq0$, $b_0=0$, $b_1=0$, $b_2 \neq0$,
$c_0 \neq0\}$;
\item[(m-3)] $\{a_0 \neq0$, $b_0 \neq0$, $c_0=0$, $c_1=0$,
$c_2 \neq0\}$;
\item[(m-4)] $\{a_0=0$, $a_1 \neq0$, $b_0=0$, $b_1 \neq0$,
$c_0 \neq0\}$;
\item[(m-5)] $\{a_0=0$, $a_1\neq0$, $b_0 \neq0$, $c_0=0$,
$c_1 \neq0\}$;
\item[(m-6)] $\{a_0 \neq0$, $b_0=0$, $b_1 \neq0$, $c_0=0$,
$c_1 \neq0\}$.
\end{description}
In all cases, Eqs.~(\ref{main12}) and (\ref{main5}) imply that $\varphi
(u)=0,$ and so $\psi(u)=1$ for all $u \in\mathbb{N}_0$. Theorem~\ref
{t4} is proved.
\end{proof}

\section{Recursive formulas}\label{pa1}

In this section, we prove Theorem~\ref{t5}. Equality \eqref{main12}
from the previous section plays a crucial role.

\begin{proof}[Proof of Theorem~\ref{t5}]
Let we consider the case $\mathbb{E}S<3$.
First, we prove that\break $\varphi(\infty)=1$. According to the definition
\begin{equation*}
\varphi(\infty)=\lim\limits
_{u\rightarrow\infty}\mathbb{P} \Biggl(\bigcap
\limits_{n=1}^{\infty}
\Biggl\{\sum_{i=1}^{n}(Z_i-1)<u
\Biggr\} \Biggr)= \lim\limits
_{u\rightarrow\infty}\mathbb{P} \Bigl(\sup\limits
_{u\geqslant
1}
\eta_n<u \Bigr),
\end{equation*}
where\vspace{-2mm}
\[
\eta_n=\sum_{i=1}^{n}(Z_i-1), \quad n\in\mathbb{N}.
\]
If $n=3N$, $N \in\mathbb{N}$, then
\begin{align*}
\frac{\eta_n}{n}=\frac{\eta_{3N}}{3N} &=\frac{1}{3} \Biggl(\frac{1}{N}\sum_{i=1}^N(Z_{3i-2}-1)
 +\frac{1}{N}\sum_{i=1}^N(Z_{3i-1}-1)+\frac{1}{N}\sum_{i=1}^N(Z_{3i}-1)\Biggr).
\end{align*}
If $n=3N+1$, $N \in\mathbb{N}$, then
\begin{align*}
\frac{\eta_n}{n}=\frac{\eta_{3N+1}}{3N+1} &=\frac{N+1}{3N+1} \ \frac{1}{N+1}\sum_{i=1}^{N+1}(Z_{3i-2}-1)\\
&\quad +\frac{N}{3N+1} \Biggl(\frac{1}{N}\sum_{i=1}^N(Z_{3i-1}-1)+\frac{1}{N}\sum_{i=1}^N(Z_{3i}-1) \Biggr).
\end{align*}
If $n=3N+2$, $N \in\mathbb{N}$, then
\begin{align*}
\frac{\eta_n}{n}=\frac{\eta_{3N+2}}{3N+2}&=\frac{N}{3N+2} \ \frac{1}{N}\sum_{i=1}^N(Z_{3i}-1)\\
&\quad +\frac{N+1}{3N+2} \Biggl(\frac{1}{N+1}\sum_{i=1}^{N+1}(Z_{3i-2}-1)+\frac{1}{N+1}\sum_{i=1}^{N+1}(Z_{3i-1}-1) \Biggr).
\end{align*}
Hence, the strong law of large numbers implies that
\begin{align*}
\frac{\eta_n}{n}\mathop{\rightarrow}\limits
_{n\rightarrow\infty}\frac{1}{3}(
\mathbb{E}Z_1-1+\mathbb{E}Z_2-1+\mathbb{E}Z_3-1)=
\frac{\mathbb{E}S-3}{3}
\end{align*}
almost surely.

From this it follows that
\begin{align}
\label{main14} \mathbb{P} \biggl(\sup_{m\geqslant n} \Big|
\frac{\eta_m}{m}+\mu \Big|<\frac{\mu}{2} \biggr) \mathop{\rightarrow}\limits
_{n\rightarrow\infty
}\ 1
\end{align}
with $\mu:=(\mathbb{E}S-3)/3 >0$.

For arbitrary positive $u$ and $N \in\mathbb{N}$, we have
\begin{align*}
\mathbb{P} \Bigl(\sup_{n\geqslant1} \eta_n <u \Bigr) &
\geqslant\mathbb {P} \Biggl( \Biggl(\bigcap\limits
_{n=1}^{N}
\biggl\{\eta_n\leqslant\frac
{u}{2} \biggr\} \Biggr)\cap
\Biggl(\bigcap\limits
_{n=N+1}^{\infty} \biggl\{
\eta_n\leqslant\frac
{u}{2} \biggr\} \Biggr) \Biggr)
\\
&\geqslant\mathbb{P} \Biggl(\bigcap\limits
_{n=1}^{N} \biggl
\{\eta_n\leqslant \frac{u}{2} \biggr\} \Biggr)+\mathbb{P}
\Biggl(\bigcap\limits
_{n=N+1}^{\infty} \{\eta _n
\leqslant0 \} \Biggr)-1
\\
&\geqslant\mathbb{P} \Biggl(\bigcap\limits
_{n=1}^{N} \biggl
\{\eta_n\leqslant \frac{u}{2} \biggr\} \Biggr)+\mathbb{P}
\biggl(\sup_{m \geqslant N+1} \Big|\frac{\eta_m}{m}+ \mu \Big| <
\frac{\mu}{2} \biggr)-1.
\end{align*}
The last inequality implies that
\begin{align*}
\lim\limits
_{u\rightarrow\infty} \mathbb{P} \Bigl(\sup_{n\geqslant1}
\eta_n <u \Bigr) \geqslant \mathbb{P} \biggl(\sup
_{m\geqslant N+1} \Big|\frac{\eta_m}{m}+\mu \Big| <\frac{\mu}{2} \biggr)
\end{align*}
for arbitrary $N \in\mathbb{N}$.

Hence, according to (\ref{main14}), we have that $\varphi(\infty)=1$.

Substituting this into (\ref{main12}), we get
\begin{align}
\label{main15} 3-\mathbb{E}S=\varphi(0)+b_0c_0
\varphi(2)+b_0c_1\varphi(1)+c_0\varphi(1).
\end{align}
In addition, Eq.~(\ref{main5}) can be rewritten as follows:
\begin{align}
\label{main16}
\varphi(u)&=\sum_{k=0}^{u+2}s_{u+2-k}\ \varphi (k+1)-a_{u+1}b_0c_0\varphi(2)-a_{u+1}b_0c_1\varphi(1)\nonumber\\
&\quad -c_0\varphi(1)\sum_{k=0}^{u+2}a_k b_{u+2-k}, \quad u \in\mathbb{N}_0.
\end{align}
Now we consider the last two formulas to get a suitable recursion
procedure described in Theorem~\ref{t5}.
\begin{itemize}
\item First, let $s_0=a_0b_0c_0 \neq0$, and let the
sequences $\alpha_n$, $\beta_n$, $\gamma_n$ be defined in the statement
of Theorem~\ref{t5}.
\end{itemize}

We prove \eqref{opop} by induction.
We observe that relation (\ref{main15}) implies immediately:
\begin{align*}
\varphi(0)&=\alpha_0\varphi(0)+\beta_0\varphi(1)+\gamma_0(3-\mathbb {E}S),\\
\varphi(1)&=\alpha_1\varphi(0)+\beta_1\varphi(1)+\gamma_1(3-\mathbb {E}S),\\
\varphi(2)&=\alpha_2\varphi(0)+\beta_2\varphi(1)+\gamma_2(3-\mathbb{E}S).
\end{align*}
Now suppose that Eq.~(\ref{opop}) holds for all $n =0,1,\ldots,N-1$,
and we will prove that (\ref{opop}) holds for $n =N$.
By (\ref{main16}) we have
\begin{align*}
\varphi(N-3)&=\sum_{k=0}^{N-1}s_{N-1-k}\ \varphi (k+1)-a_{N-2}b_0c_0\varphi(2)-a_{N-2}b_0c_1\varphi(1)\\
&\quad -c_0\varphi(1)\sum_{k=0}^{N-1}a_kb_{N-1-k}.
\end{align*}
Therefore, using the induction hypothesis, we get
\begin{align}
\label{main18}
s_0\varphi(N)&=\varphi(N-3)-\sum_{k=1}^{N-1}s_{k}\varphi(N-k)+a_{N-2}b_0c_0\varphi(2)\nonumber\\
&\quad +a_{N-2}b_0c_1\varphi(1)+c_0\varphi(1)\sum_{k=0}^{N-1}a_kb_{N-1-k}\nonumber\\
&=\alpha_{N-3}\varphi(0)+\beta_{N-3}\varphi(1)+\gamma_{N-3}(3-\mathbb {E}S)\nonumber\\
&\quad -\sum_{k=1}^{N-1}s_{k}\bigl(\alpha_{N-k}\varphi(0) +\beta_{N-k}\varphi(1)\nonumber\\
&\quad +\gamma_{N-k}(3-\mathbb{E}S)\bigr)+a_{N-2}b_0c_0\varphi (2)+a_{N-2}b_0c_1\varphi(1)\nonumber\\
&\quad +c_0\varphi(1)\sum_{k=0}^{N-1}a_kb_{N-1-k}.
\end{align}
Since
\begin{align*}
\varphi(2)=-\frac{1}{b_0c_0}\varphi(0)-\frac{c_1}{c_0}\varphi(1)-\frac{1}{b_0}\varphi(1)+\frac{3-\mathbb{E}S}{b_0c_0}
\end{align*}
due to (\ref{main15}), we obtain from (\ref{main18}) that
\begin{align*}
\varphi(N)&=\varphi(0) \ \frac{1}{s_0} \Biggl(\alpha_{N-3}-\sum_{k=1}^{N-1}s_k\alpha_{N-k}-a_{N-2} \Biggr)\\
&\quad +\varphi(1) \ \frac{1}{s_0} \Biggl(\beta_{N-3}-\sum_{k=1}^{N-1}s_k\beta_{N-k}-a_{N-2}c_0+c_0\sum_{k=0}^{N-1}a_kb_{N-1-k}\Biggr)\\
&\quad +(3-\mathbb{E}S) \ \frac{1}{s_0} \Biggl(\gamma_{N-3}-\sum_{k=1}^{N-1}s_k\gamma_{N-k}+a_{N-2} \Biggr)\\
&=\alpha_N\varphi(0)+\beta_N\varphi(1)+\gamma_N(3-\mathbb{E}S).
\end{align*}
Hence, the desired relation (\ref{opop}) holds for all $n \in\mathbb
{N}_0$ by induction.

\begin{itemize}
\item If $\{a_0=0,b_0\neq0, c_0\neq0, a_1\neq0\},$ then
$s_0=0 $ and $s_1\neq0$. Equality \eqref{main16} with $u=0$ implies
that $\varphi(0)=0$. The recursive relation \eqref{4+} can be derived
from the basic equalities \eqref{main15} and \eqref{main16} in the same
manner as relation \eqref{opop}.

\item If $\{a_0\neq0,b_0= 0, c_0\neq0, b_1\neq0\},$ then
it follows from Eq.~\eqref{main15} that
$3-\mathbb{E}S=\varphi(0)+c_0\varphi(1).$
Hence, $\varphi(1)=\tilde{\alpha}_1\varphi(0)+\tilde{\gamma
}_1(3-\mathbb{E}S)$. This is Eq.~\eqref{4++} for $n=1$. The validity of
\eqref{4++} for the other $n$ can be derived from \eqref{main16} using
the induction arguments.

\item In the case $\{a_0\neq0,b_0\neq0, c_0=0, c_1\neq0\}
$, formula \eqref{4+++} follows from \eqref{main15} if $n=1$. For the
other $n$, formula \eqref{4+++} follows from \eqref{main16} again by
using the induction arguments.

\item In the case $\{a_0=0, b_0=0, c_0\neq0\}$, we have
that $s_0=s_1=0$ and $s_2\neq0$ because of $\mathbb{E}S<3$. It follows
immediately from \eqref{main16} that $\varphi(0)=0$, whereas from \eqref
{main15} it follows that $\varphi(1)=(3-\mathbb{E}S)/s_2$. Finally, we
can get the recursive formula \eqref{4A} from \eqref{main16} using the
same induction procedure.

\item In the case $\{a_0=0, b_0\neq0, c_0= 0\}$, similarly
as in the previous one, we derive that $\varphi(0)=s_2\varphi(1)$ from
\eqref{main16}, we derive that $3-\mathbb{E}S=\varphi(0)+b_0c_1\varphi
(1)$ from \eqref{main15}, and we derive the desired formula \eqref{4B}
again from \eqref{main16}.

\item The case $\{a_0\neq0, b_0=0, c_0= 0\}$ is considered
completely analogously as both previous cases. Here we omit details.
\end{itemize}

We have that $\mathbb{E}S<3$. So, it remains to study the following
possible cases:
\begin{align*}
&\{a_0=a_1=0, b_0\neq0, c_0 \neq0\}, \qquad \{a_0\neq0, b_0=b_1=0, c_0\neq0\},\\
&\{a_0\neq0, b_0\neq0, c_0=c_1=0\}.
\end{align*}
In all these cases, the presented recursion relations follow from
Eq.~\eqref{main16}, and the initial values of survival probability
$\varphi(0)$ and $\varphi(1)$ can be obtained using Eq.~\eqref{main15}
together with Eq.~\eqref{main16} with $u=0$ or $u=1$. Theorem~\ref{t5}
is proved.
\end{proof}

\section{Numerical examples}\label{papa}

In this section, we present three examples of computing numerical
values of the finite-time ruin probability and the ultimate ruin
probability. All calculations are carried out using software
MATHEMATICA. In all presented tables, the numbers are rounded up to
three decimal places.

\smallskip

\noindent\textit{First example.} Suppose that the
three-seasonal discrete-time risk model is generated by the
r.v.s\vspace{-6pt}
\begin{table}[h]
\begin{eqnarray*}
\begin{tabular}{c|c|c|c}
$Z_1$&0&1&2\\
\hline
$\mathbb{P}$&$0.5$&$0.25$&$0.25$
\end{tabular} %
\,,\qquad\quad\!   %
\begin{tabular} {c|c|c|c} $Z_2$&0&1&2
\\
\hline$\mathbb{P}$&$0.4$&$0.3$&$0.3$ \end{tabular} %
\,,\qquad\quad\!   %
\begin{tabular} {c|c|c|c} $Z_3$&0&1&2
\\
\hline$\mathbb{P}$&$0.3$&$0.35$&$0.35$ \end{tabular} %
\,.
\end{eqnarray*}\vspace{-6pt}
\end{table}

In Table 1, we give the finite-time ruin probabilities for initial
surpluses $u\in\{0,1, \ldots10,20\}$ and times $T\in\{1,2,\ldots,
10,20\} $ together with the ultimate ruin probabilities for the same $u$.

\begin{table}[b]
\tabcolsep=4.1pt
\caption{Ruin probabilities for the first model}
\begin{tabular*}{\textwidth}{@{\extracolsep{\fill}}ccccccccccccc@{}}\hline
$T$&\multicolumn{12}{l}{$u$}\\
\cline{2-13}
&$0$&$1$&$2$&$3$&$4$&$5$&$6$&$7$&$8$&$9$&$10$&$20$\\
\hline
$1$&$0.5$&$0.25$&$0$&$0$&$0$&$0$&$0$&$0$&$0$&$0$&$0$&$0$\\
$2$&$0.65$&$0.325$&$0.075$&$0$&$0$&$0$&$0$&$0$&$0$&$0$&$0$&$0$\\
$3$&$0.703$&$0.404$&$0.128$&$0.026$&$0$&$0$&$0$&$0$&$0$&$0$&$0$&$0$\\
$4$&$0.733$&$0.445$&$0.169$&$0.046$&$0.007$&$0$&$0$&$0$&$0$&$0$&$0$&$0$\\
$5$&$0.751$&$0.475$&$0.2$&$0.066$&$0.014$&$0.002$&$0$&$0$&$0$&$0$&$0$&$0$\\
$6$&$0.768$&$0.503$&$0.233$&$0.089$&$0.026$&$0.005$&$0.001$&$0$&$0$&$0$&$0$&$0$\\
$7$&$0.779$&$0.523$&$0.256$&$0.106$&$0.035$&$0.009$&$0.002$&$0$&$0$&$0$&$0$&$0$\\
$8$&$0.788$&$0.538$&$0.275$&$0.122$&$0.045$&$0.014$&$0.003$&$0.001$&$0$&$0$&$0$&$0$\\
$9$&$0.796$&$0.554$&$0.295$&$0.139$&$0.056$&$0.019$&$0.006$&$0.001$&$0$&$0$&$0$&$0$\\
$10$&$0.802$&$0.566$&$0.310$&$0.152$&$0.065$&$0.024$&$0.008$&$0.002$&$0$&$0$&$0$&$0$\\
$20$&$0.836$&$0.632$&$0.402$&$0.243$&$0.138$&$0.075$&$0.038$&$0.018$&$0.008$&$0.003$&$0.001$&$0$\\
$\infty $&$0.877$&$0.722$&$0.541$&$0.404$&$0.301$&$0.224$&$0.167$&$0.125$&$0.093$&$0.069$&$0.052$&$0.003$\\
\hline
\end{tabular*} %
\end{table}
\begin{table}[t]
\tabcolsep=4.1pt
\caption{Ruin probabilities for the Poison model}
\begin{tabular*}{\textwidth}{@{\extracolsep{\fill}}ccccccccccccc@{}}\hline
$T$&\multicolumn{12}{l}{$u$}\\
\cline{2-13}
&$0$&$1$&$2$&$3$&$4$&$5$&$6$&$7$&$8$&$9$&$10$&$20$\\
\hline $1$&$0.393$&$0.09$&$0.014$&$0.002$&$0$&$0$&$0$&$0$&$0$&$0$&$0$&$0$\\
$2$&$0.481$&$0.152$&$0.037$&$0.008$&$0.001$&$0$&$0$&$0$&$0$&$0$&$0$&$0$\\
$3$&$0.535$&$0.205$&$0.066$&$0.019$&$0.005$&$0.001$&$0$&$0$&$0$&$0$&$0$&$0$\\
$4$&$0.549$&$0.221$&$0.075$&$0.023$&$0.006$&$0.002$&$0$&$0$&$0$&$0$&$0$&$0$\\
$5$&$0.562$&$0.236$&$0.086$&$0.028$&$0.009$&$0.002$&$0.001$&$0$&$0$&$0$&$0$&$0$\\
$6$&$0.576$&$0.254$&$0.099$&$0.036$&$0.012$&$0.004$&$0.001$&$0$&$0$&$0$&$0$&$0$\\
$7$&$0.581$&$0.26$&$0.103$&$0.038$&$0.013$&$0.004$&$0.001$&$0$&$0$&$0$&$0$&$0$\\
$8$&$0.585$&$0.266$&$0.109$&$0.042$&$0.015$&$0.005$&$0.002$&$0.001$&$0$&$0$&$0$&$0$\\
$9$&$0.591$&$0.274$&$0.115$&$0.046$&$0.017$&$0.006$&$0.002$&$0.001$&$0$&$0$&$0$&$0$\\
$10$&$0.593$&$0.277$&$0.118$&$0.048$&$0.018$&$0.007$&$0.002$&$0.001$&$0$&$0$&$0$&$0$\\
$20$&$0.605$&$0.295$&$0.134$&$0.059$&$0.026$&$0.011$&$0.005$&$0.002$&$0.001$&$0.0003$&$0.0001$&$0$\\
$\infty $&$0.609$&$0.3$&$0.139$&$0.064$&$0.029$&$0.013$&$0.006$&$0.003$&$0.001$&$0.001$&$0.0002$&$0$\\
\hline
\end{tabular*} %
\end{table}
\begin{table}[b]
\tabcolsep=4.1pt
\caption{Ruin probabilities for the geometric model}
\begin{tabular*}{\textwidth}{@{\extracolsep{\fill}}ccccccccccccc@{}}\hline
$T$&\multicolumn{12}{l}{$u$}\\
\cline{2-13}
&$0$&$1$&$2$&$3$&$4$&$5$&$6$&$7$&$8$&$9$&$10$&$20$\\
\hline $1$&$0.25$&$0.063$&$0.016$&$0.004$&$0.001$&$0$&$0$&$0$&$0$&$0$&$0$&$0$\\
$2$&$0.333$&$0.111$&$0.037$&$0.012$&$0.004$&$0.001$&$0$&$0$&$0$&$0$&$0$&$0$\\
$3$&$0.556$&$0.34$&$0.218$&$0.143$&$0.094$&$0.063$&$0.042$&$0.028$&$0.019$&$0.012$&$0.008$&$0$\\
$4$&$0.566$&$0.35$&$0.226$&$0.149$&$0.099$&$0.066$&$0.044$&$0.029$&$0.019$&$0.013$&$0.009$&$0$\\
$5$&$0.576$&$0.362$&$0.236$&$0.156$&$0.104$&$0.069$&$0.046$&$0.031$&$0.021$&$0.014$&$0.009$&$0$\\
$6$&$0.653$&$0.461$&$0.334$&$0.243$&$0.176$&$0.127$&$0.091$&$0.065$&$0.046$&$0.033$&$0.023$&$0.001$\\
$7$&$0.657$&$0.466$&$0.338$&$0.247$&$0.18$&$0.13$&$0.093$&$0.067$&$0.048$&$0.034$&$0.024$&$0.001$\\
$8$&$0.661$&$0.471$&$0.344$&$0.252$&$0.184$&$0.133$&$0.096$&$0.069$&$0.049$&$0.035$&$0.025$&$0.001$\\
$9$&$0.703$&$0.529$&$0.406$&$0.312$&$0.239$&$0.181$&$0.137$&$0.102$&$0.076$&$0.056$&$0.042$&$0.002$\\
$10$&$0.705$&$0.532$&$0.409$&$0.315$&$0.241$&$0.184$&$0.139$&$0.104$&$0.078$&$0.057$&$0.042$&$0.002$\\
$20$&$0.774$&$0.635$&$0.528$&$0.438$&$0.363$&$0.298$&$0.243$&$0.197$&$0.159$&$0.127$&$0.101$&$0.008$\\
$\infty $&$0.927$&$0.879$&$0.84$&$0.803$&$0.769$&$0.736$&$0.705$&$0.675$&$0.647$&$0.619$&$0.593$&$0.385$\\
\hline
\end{tabular*} %
\end{table}

Numerical values of the finite-time ruin probabilities are calculated
using the algorithm presented in Theorem~\ref{t3}, whereas the values
of the ultimate ruin probabilities are obtained using the formulas of
Theorem~\ref{t5}. Namely, first, we observe that $\mathbb{E}S=2.7$ and
$s_0\neq0$ in this case. So, Eq.~\eqref{opop} holds for an arbitrary
$n\in\mathbb{N}_0$. In\vadjust{\eject} particular,
\[
\begin{cases}
\varphi(250)=\alpha_{250}\varphi(0)+\beta_{250}\varphi(1)+0.3\gamma
_{250},\\
\varphi(251)=\alpha_{251}\varphi(0)+\beta_{251}\varphi(1)+0.3\gamma_{251},
\end{cases} %
\]
According to the first statement of Theorem~\ref{t5}, we can suppose
that $\varphi(250)=\break\varphi(251)=1$. So, we get $\varphi(0)$ and $\varphi
(1)$ from this system
after calculating the values of $\{\alpha_0,\alpha_1, \ldots\alpha
_{251}\}$, $\{\beta_0,\beta_1,\ldots,\beta_{251}\},$ and $\{\gamma
_0,\gamma_1, \ldots, \gamma_{251}\}$. Now it remains to use Eq.~\eqref
{opop} again to obtain the values $\varphi(u)=1-\psi(u)$ for initial
surplus values $u\in\{2,3,\ldots,10,20\}$.

\smallskip

\noindent\textit{Second example.} Suppose now that the
three-seasonal discrete-time risk model is generated by three Poison
distributions:
$Z_1$ with parameter $1/2$, $Z_2$ with parameter $2/3,$ and $Z_3$ with
parameter $4/5$. In Table 2, we present the finite-time ruin
probabilities for $u\in\{0,1, \ldots10,20\}$, $T\in\{1,2,\ldots,
10,20\} $ and the ultimate ruin probabilities for $u\in\{0,1, \ldots
10,20\}$. All calculations are made similarly as in the first example.

\smallskip

\noindent\textit{Third example.} We write $\xi\sim\mathcal
{G}(p)$ if \(\xi\) is a r.v.\ having the geometric distribution with
parameter $p\in(0,1)$, that is, $\mathbb{P}(\xi=k)=p(1-p)^k, k\in\mathbb
{N}_0$. Suppose that the three-seasonal risk model is generated by
r.v.s $Z_1\sim\mathcal{G}(3/4)$, $Z_2\sim\mathcal{G}(2/3),$ and $Z_3\sim
\mathcal{G}(1/3)$. In Table 3, we present the finite-time and
infinite-time ruin probabilities for this geometric model. The values
of initial surpluses $u$ and times $T$ are the same as in the previous examples.

The presented tables show that the behavior of ruin probabilities is
closely related to the structure of generating r.v.s and not only to
the global model characteristic $\mathbb{E}S$.

\section*{Acknowledgments} We would like to thank the anonymous referee
for the detailed and helpful comments on the first version of the manuscript.

\addcontentsline{toc}{section}{References}


\end{document}